\tikzstyle{vertex}=[circle, draw, inner sep=0pt, minimum size=6pt]
\def \Z {{\mathbb{Z}}}
\newtheorem*{theorem*}{Theorem}
\newtheorem{theorem}{Theorem}
\newtheorem{cor}[theorem]{Corollary}
\newtheorem{lemma}[theorem]{Lemma}
\newtheorem{ex}[theorem]{Example}
\newtheorem{definition}[theorem]{Definition}
\newtheorem*{ex*}{Example}
\newtheorem{pro}[theorem]{Proposition}
\title{$C$-Width of Graph of Groups}
\author[]{Shrinit Singh}
\affil[]{Shiv Nadar Institution of Eminence, ss101@snu.edu.in}
\date{}
\begin{document}
\maketitle
\begin{abstract}
In this paper, we study the $C$-width of HNN extension of a group via its proper isomorphic subgroups and amalgamated free product of two groups via their proper isomorphic subgroups with respect to conjugation invariant generating set. We will also establish that infinite one relator group has infinite C-width.
\end{abstract}
{\bf{Key Words}}:; Graph of groups; HNN extension; amalgamated free
product.\\
{\bf{AMS(2020)}}:20F65, 20E06 \\

 \section{Introduction}
 Let $G$ be a group and $S$ be a generating set of $G$. To measure the ``length" of an element $g\in G$ with respect to $S$, denoted as $l_S(g)$, we count the minimum number of elements from $S \cup S^{-1}$ required to express $g$ as a product. The width of the group $G$ with respect to the generating set $S$, denoted as $wid(G, S)$, is defined as the supremum value of $l_S(g)$ for all elements $g \in G$.

Various authors have introduced and investigated various types of group width. Some of these width variants include verbal width, palindromic width, and $C$-width. For a comprehensive overview of results related to these different width concepts, readers can refer to the following references: \cite{faiziev2001problem, bardakov1997width, bardakov2006palindromic, BARDAKOV2005574, bardakov2012generating, bardakov2014palindromic, bardakov2015palindromic, bardakov2017palindromic, dobrynina2000width, dobrynina2009solution, gongopadhyay2020palindromic, rhemtulla_1968}. We will focus on $C$-width of some free constructions of groups.

 Before delving into $C$-width, we first define the free constructions: HNN extension and Amalgamated free product.
 
  {\bf HNN Extensions:} Let $G$ be a group and $A$ and $B$ be proper isomorphic subgroups of $G$ with isomorphism $\phi: A \longrightarrow B.$ Then the HNN extension of $G$ is $$G_* = \langle G,t | t^{-1}at = \phi(a),a \in A \rangle.$$

  {\bf Amalgamated free product:} Let $G_1$ and $G_2$ be two groups with subgroups $H_1$ and $H_2$ respectively such that $\phi : H_1 \longrightarrow H_2$ is an isomorphism. Then the free product of $G_1$ and $G_2$ amalgamating the subgroups $H_1$ and $H_2$ by the isomorphism $\phi$ is the group $$G= \langle G_1, G_2 | \phi(h) = h, h \in H_1 \rangle.$$ 
  Since $H_1$ and $H_2$ are identified in $G$, we will denote $H_1$ and $H_2$ by $H.$

   Bardakov et al \cite{bardakov2012generating} introduced the concept of finite $C$-width and defined it as follows:
  
\begin{definition}
    A group $G$ is said to have finite $C$-width if $G$ has finite width with respect to any conjugation invariant generating set $S.$ Otherwise, we say that $G$ has infinite $C$-width.
\end{definition}

Bardakov et al \cite{bardakov2012generating} proved the following result:

\begin{theorem}{\cite{bardakov2012generating}}
    Let $K$ be a group that is a free product of two nontrivial groups $K_1$ and $K_2$ . Then $K$ has finite $C$-width if and only if both $K_1$ and $K_2$ are group of order $2.$
\end{theorem}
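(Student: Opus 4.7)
The plan is to prove the two directions of the equivalence separately.

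For the easy direction $(\Leftarrow)$, assume $K_1 \cong K_2 \cong \mathbb{Z}/2\mathbb{Z}$, so $K$ is isomorphic to $D_\infty$. Writing $a, b$ for the two involutive generators and $t := ab$, the conjugation action yields $\{t^{\pm n}\}_{n \geq 1}$ together with the two parity classes of reflections $\{t^{2k}a\}$ and $\{t^{2k+1}a\}$ as the nontrivial conjugacy classes. Any conjugation-invariant generating set $S$ must contain a reflection (because $\langle t \rangle$ is proper). Using the identity $t^n = a \cdot (at^n)$, where $at^n = t^{-n}a$ is itself a reflection of appropriate parity, a short case analysis on which parity classes (and which translations) lie in $S$ yields $\mathrm{wid}(D_\infty, S) \leq 3$, establishing finite $C$-width.

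For the hard direction $(\Rightarrow)$, suppose without loss of generality that $|K_1| \geq 3$. I would fix $a \in K_1$ with $a \neq 1$ and $a^2 \neq 1$ (possible by the hypothesis), together with $b \in K_2$ nontrivial, and let $X \subseteq K_1 \cup K_2$ be a generating set of $K$ containing $a$ and $b$. Take the conjugation-invariant generating set
\[
S := \bigcup_{h \in K} hXh^{-1},
\]
and as witnesses $g_n := (ab)^n$. The strategy is to identify a conjugation-invariant subadditive function $\mu : K \to \mathbb{N}$ that is bounded on $S$ and satisfies $\mu(g_n) \to \infty$; this would give $l_S(g_n) \geq \mu(g_n)/\sup_{s \in S}\mu(s) \to \infty$, hence infinite $C$-width.

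The main obstacle is to construct this $\mu$. The natural candidate, the cyclically reduced syllable length $\lambda$, is conjugation-invariant with $\lambda(s)=1$ for $s \in S$ and $\lambda(g_n) = 2n$, but it is \emph{not} subadditive: for instance, $b$ and $aba^{-1}$ are both conjugates of $b$ (each with $\lambda=1$), yet their product $b\cdot aba^{-1}$ has cyclically reduced length $4$. The rigidity separating $|K_1|\geq 3$ from the dihedral case is concrete: in $D_\infty$ one has the collapse $t^n = a\cdot(at^n)$ into two involutions (enabled by $a^2 = b^2 = 1$), whereas for $|K_1|\geq 3$ a direct normal-form computation shows $a(ab)^n$ is cyclically reduced of length $2n$ (hence hyperbolic on the Bass-Serre tree), ruling out any such two-factor collapse. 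Following Bardakov et al., I would refine $\lambda$ into a subadditive invariant by tracking the \emph{specific} syllable content (not merely the count) in cyclic normal forms; an alternative would be to argue directly that any factorization $g_n = s_1\cdots s_k$ with $s_i \in S$ forces $k \geq cn$ by controlling syllable interactions at each junction $s_i s_{i+1}$, using that $X$ necessarily contains elements of $K_1\setminus\{a,a^{-1}\}$ whose traces cannot be entirely absorbed by bounded cancellation. The central technical challenge I expect is to verify the resulting subadditivity estimate rigorously in the presence of the many conjugation-induced cancellation patterns that free product normal forms admit.
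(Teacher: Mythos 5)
Your backward direction (the $D_\infty$ case) is fine in outline: a conjugation-invariant generating set must contain a full parity class of reflections, and the case analysis you sketch does give width at most $3$. The forward direction, however, has two genuine gaps. First, your choice of $a \in K_1$ with $a^2 \neq 1$ is not ``possible by the hypothesis'': $|K_1| \geq 3$ does not force an element of order greater than $2$ (take $K_1 = \Z_2 \times \Z_2$). This is not a cosmetic issue --- it is precisely why the analogous argument in this paper (and in Bardakov et al., whose Theorem this is; the paper only cites it and proves instead the amalgamated analogue, Theorem \ref{amal}(\ref{amal1})) splits into Case 1 ($HaH \neq Ha^{-1}H$ for some $a$, which for a genuine free product means $a^2 \neq 1$) and a separate Case 2 handled by a different manipulation following Dobrynina. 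Your proof covers at most Case 1.

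Second, and more seriously, even in Case 1 your argument stops exactly where the work begins: you correctly observe that cyclically reduced length is not subadditive and then defer the construction of the invariant $\mu$ to ``following Bardakov et al.'' The entire technical content of the theorem is that construction. The paper's version (Section 3) takes $a$ with $a^2 \neq 1$, defines $a$-segments and $a^{-1}$-segments of each odd length $2k-1$ in a special normal form, sets $d_k = p_k - m_k$, $r_k = d_k \bmod 2$, and $f = \sum_k r_k$; one then proves $f$ is well defined, is a quasi-homomorphism with uniformly bounded defect (Lemma \ref{pro}), is bounded by $3$ on the conjugation-invariant generating set (Lemma \ref{S}), and is unbounded on a carefully designed sequence. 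Note also that your witnesses $g_n = (ab)^n$ would fail for this $f$: every $a$-segment of $(ab)^n$ has length $1$, so $f((ab)^n) \leq 1$ for all $n$. The sequence must be engineered jointly with the invariant --- the paper uses $g_n = baba^{-1}ba(ba^{-1})^2 \cdots ba(ba^{-1})^n ba$ precisely so that segments of $n$ distinct lengths each contribute to a different $r_k$. Without an explicit $\mu$, a verified bound on $S$, a verified defect bound, and a compatible unbounded sequence, the forward direction is not proved.
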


 Moreover, they established another important result:
\begin{theorem}{\label{ext}\cite{bardakov2012generating}}
    Let $G$ be a group and $H$ be a normal subgroup of $G$ such that $H$ and $G/H$ both have finite $C$-width. Then $G$ also has finite $C$-width.
\end{theorem}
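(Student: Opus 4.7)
Fix any conjugation-invariant generating set $S$ of $G$; the task is to bound $l_S$ on $G$. The plan has two stages: first, reduce modulo $H$ using the finite $C$-width of $G/H$; then handle the residual element of $H$ via an appropriate conjugation-invariant generating set $T$ of $H$. The point to watch out for is that $S \cap H$ need not generate $H$, so producing the right generating set of $H$ is the real subtlety.

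\textbf{Reducing mod $H$.} Let $\bar S = \{sH : s \in S\} \subset G/H$. Because $S$ generates $G$, $\bar S$ generates $G/H$; and because any conjugation in $G/H$ lifts to one in $G$ (where $S$ is invariant), $\bar S$ is conjugation-invariant in $G/H$. By hypothesis $N_1 := wid(G/H, \bar S) < \infty$. In particular I may fix a transversal $U \subset G$ of $H$ containing $1$, with $l_S(u) \le N_1$ for every $u \in U$, and every $g \in G$ then admits a factorisation $g = s_1 \cdots s_k \, h$ with $s_i \in S \cup S^{-1}$, $k \le N_1$, and $h \in H$. So it suffices to bound $l_S$ on $H$.

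\textbf{A generating set for $H$.} Define
\[ T := \{\, h \in H : l_S(h) \le 2N_1 + 1 \,\}. \]
Since $S \cup S^{-1}$ is conjugation-invariant in $G$, conjugating a minimal $S$-expression letter by letter shows that $l_S$ is invariant under conjugation by every element of $G$; in particular $T$ is conjugation-invariant in $H$. To see that $T$ generates $H$, I apply a Reidemeister--Schreier-type rewriting: for $h = s_1 \cdots s_m \in H$ with $s_i \in S \cup S^{-1}$, pick $u_i \in U$ representing the coset $s_1 \cdots s_i H$ (so $u_0 = u_m = 1$) and expand
\[ h = \prod_{i=1}^{m} (u_{i-1} s_i u_i^{-1}). \]
Each factor lies in $H$ and has $l_S$-length at most $2N_1 + 1$, hence lies in $T$.

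\textbf{Conclusion.} Applying the finite $C$-width of $H$ to the conjugation-invariant generating set $T$ gives $N_2 := wid(H, T) < \infty$, so every $h \in H$ satisfies $l_S(h) \le N_2(2N_1 + 1)$. Combined with the first stage this yields $l_S(g) \le N_1 + N_2(2N_1 + 1)$ uniformly in $g \in G$, so $wid(G, S) < \infty$; since $S$ was arbitrary, $G$ has finite $C$-width. The main obstacle is constructing $T$: naive candidates either fail to generate $H$ (e.g.\ $S \cap H$) or blow up in $S$-length under conjugation closure. The crucial observation is that the conjugation-invariance of $S$ forces $l_S$ itself to be a conjugation-invariant function, which makes the simple length-based definition of $T$ both conjugation-invariant and, via the Schreier rewriting, a generating set for $H$.
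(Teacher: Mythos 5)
The paper does not prove this theorem; it is imported from \cite{bardakov2012generating} and stated without proof, so there is no in-paper argument to compare against. Your proposal is a correct and complete self-contained proof. The two stages are sound: $\bar S$ is indeed a conjugation-invariant generating set of $G/H$ (conjugation by $gH$ sends $sH$ to $gsg^{-1}H \in \bar S$), giving a transversal $U$ of bounded $S$-length; and the Schreier rewriting $h = \prod_{i=1}^{m}(u_{i-1}s_iu_i^{-1})$ telescopes correctly, with each factor lying in $H$ by normality and having $S$-length at most $2N_1+1$. Your key observation --- that conjugation-invariance of $S$ makes $l_S$ itself a class function, so the length-ball $T = \{h \in H : l_S(h) \le 2N_1+1\}$ is automatically conjugation-invariant in $H$ --- is exactly what makes the hypothesis on $H$ applicable, and the resulting bound $l_S(g) \le N_1 + N_2(2N_1+1)$ is uniform as required. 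This is essentially the standard extension-closure argument for width with respect to conjugation-invariant generating sets; no gaps.
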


This theorem demonstrates that the property of finite $C$-width is closed under extensions of groups that possess finite $C$-width. However, it's important to note that finite $C$-width is not necessarily closed under subgroups, as exemplified by $\Z_2*\Z_2$, a group with finite $C$-width but a normal subgroup exhibiting infinite $C$-width. But we will show that the property of finite $C$-width is closed under taking quotients.

\begin{pro}{\label{finite c}}
    Let $G$ be a group which has finite $C$-width. Then each of its quotients has finite $C$-width.
\end{pro}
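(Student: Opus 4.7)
The plan is to show that any conjugation-invariant generating set of a quotient lifts to a conjugation-invariant generating set of $G$ in a length-nonincreasing manner, so that the finite $C$-width of $G$ is inherited by the quotient.

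Fix a normal subgroup $N \trianglelefteq G$, write $\pi \colon G \to G/N$ for the canonical projection, and let $\bar{S}$ be an arbitrary conjugation-invariant generating set of $G/N$; the goal is to show $wid(G/N, \bar{S}) < \infty$. The natural lift I would take is $S := \pi^{-1}(\bar{S}) \cup N$. Since $N$ is normal, $G$-conjugation descends to $G/N$-conjugation, so $\pi^{-1}(\bar{S})$ is $G$-conjugation invariant; and $N$ itself is $G$-conjugation invariant. Hence $S$ is a conjugation-invariant subset of $G$. To see that $S$ generates $G$: given $g \in G$, expand $\pi(g)$ as a product of elements of $\bar{S} \cup \bar{S}^{-1}$, lift each factor to an element of $S \cup S^{-1}$, and note that $g$ differs from the resulting product by an element of $N \subseteq S$.

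With $S$ established as a conjugation-invariant generating set of $G$, the finite $C$-width hypothesis furnishes a finite bound $M := wid(G, S)$. For any $\bar{g} \in G/N$, I would pick a lift $g \in G$, express $g = s_1 \cdots s_m$ with $m \leq M$ and $s_i \in S \cup S^{-1}$, and apply $\pi$. Each factor $\pi(s_i)$ lies in $\bar{S} \cup \bar{S}^{-1} \cup \{\bar{1}\}$, since elements of $\pi^{-1}(\bar{S})^{\pm 1}$ project into $\bar{S}^{\pm 1}$ while elements of $N^{\pm 1} = N$ project to $\bar{1}$. Deleting the trivial factors yields an expression of $\bar{g}$ as a product of at most $M$ elements of $\bar{S} \cup \bar{S}^{-1}$, so $wid(G/N, \bar{S}) \leq M < \infty$.

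The argument is almost entirely formal, and the only point needing care is the construction of $S$: one must include $N$ in the lift (not merely $\pi^{-1}(\bar{S})$) so that every fibre of $\pi$ lies in the subgroup generated by $S$, while simultaneously preserving conjugation-invariance. Once this choice is made, both requirements hold immediately and I do not anticipate any further obstacle.
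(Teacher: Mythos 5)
Your proof is correct and takes essentially the same approach as the paper: lift the conjugation-invariant generating set $\bar{S}$ of $G/N$ to its preimage in $G$ and transfer length bounds through the projection $\pi$. The only differences are cosmetic — you argue directly rather than by contradiction, and you explicitly adjoin $N$ to the lifted set to guarantee that it generates $G$, a point the paper's proof leaves implicit.
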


\begin{proof}
    On the contrary, let's assume that $G/N$ has infinite $C$-width with respect to a conjugation invariant set $S$. So we get a sequence of elements $(g_iN)$ in $G/N$ such that width of element $(g_iN)$ with respect to $S$ is strictly bigger than $ i.$

    Consider the conjugation invariant subset $S'$ of $G$ by writing all the elements of all the cosets in $S$. Then $S'$ is conjugate invariant.

    Now take the sequence of elements $(g_i)$ in $G$. Then we claim that $l_{S'}(g_i) > i$ and this implies $G$ has infinite $C$-width. If $l_{S'}(g_i) \leq i,$ then $g_i = h_1 \ldots h_r,$ where $r \leq i$ and this implies $g_iN = h_1N \ldots h_rN$ and this implies $l_S(g_iN) = r \leq i,$ which is a contradiction. And hence, each quotient has finite $C$-width.  
\end{proof}
    Here, we extended the result of \cite{bardakov2012generating} to the case of HNN extension and amalgamated free product. But before stating the result, lets compare verbal width and C-width.
    
    When considering words, it is evident that the images of these word maps are closed under automorphisms, and consequently, the images of word maps remain invariant under conjugation. While $C$-width and verbal width are very close but distinct concepts, the following observation is noteworthy: if a group exhibits infinite verbal width with respect to a set of words $S$, and verbal subgroup corresponding to the set $S$ is the whole group, then the group itself possesses infinite $C$-width. But the infinite cyclic group $\Z$ possesses finite verbal width for every word but infinite $C$-width.

    When a verbal subgroup with respect to set of word $S$ has finite width in free group, then  verbal subgroup with respect to the set of word $S$ has finite width in every group.  
    But the following two examples are interesting because the first example says that when verbal subgroup is whole group in case of free group, then width is finite and second example illustrates an already known example of group where verbal subgroup is whole group but width is infinite. 
    \begin{ex}
        Consider a subset $S$ of the free group $F_r$, where the image of the word map associated to $S$ in $F_r$ is denoted as $S(F_r)$. If the subgroup generated by $S(F_r)$ equals $F_r$, then the verbal width is finite. To see this result, let $x$ be a primitive element in the free group $F_r$, then length of $x$ with respect to $S(F_r)$ is finite, say $n$. Then any element $w \in F_r$ can be seen as an endomorphic image of $x$ and hence $w$ also has length $n$.
   \end{ex}

    \begin{ex}
        Let $G$ be an infinite simple group with infinite commutator width. The existence of such groups has been given by Alexey Muranov \cite{muranov2007finitely}. In this case, it is evident that such group has infinite $C$-width.
    \end{ex}

    In \cite{bardakov1997width}, Bardakov proved that one relator group with at least three generators has infinite verbal width. He further posed the question whether the verbal width for one relator group with two generators having a copy of non abelian free group is infinite or not. The next corollary is about $C$-width for one relator group.

    \begin{cor}
        Every one relator group with at least two generators have infinite $C$-width.
    \end{cor}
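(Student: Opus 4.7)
The plan is to reduce the corollary to the fact that the infinite cyclic group $\Z$ has infinite $C$-width, and then invoke Proposition~\ref{finite c}. Concretely, I would show that every one-relator group $G = \langle x_1, \ldots, x_n \mid r \rangle$ with $n \geq 2$ admits a surjection onto $\Z$; combined with Proposition~\ref{finite c} (finite $C$-width passes to quotients), this forces $G$ to have infinite $C$-width as well, by contrapositive.

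First I would verify that $\Z$ has infinite $C$-width. Take the generating set $S = \{1, -1\}$, which is conjugation-invariant automatically since $\Z$ is abelian; then $l_S(k) = |k|$ is unbounded in $k$. This is already implicit in the remark following Theorem~\ref{ext}, where the normal subgroup of $\Z_2 * \Z_2$ witnessing that finite $C$-width does not pass to subgroups is precisely $\Z$.

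Next I would produce the surjection $G \twoheadrightarrow \Z$. Let $e_i$ denote the exponent sum of $x_i$ in the relator $r$. Then $G^{ab} \cong \Z^n / \langle (e_1, \ldots, e_n) \rangle$, and Smith normal form yields $G^{ab} \cong \Z/d\Z \oplus \Z^{n-1}$, where $d = \gcd(e_1, \ldots, e_n)$ (with $d=0$ and $G^{ab} \cong \Z^n$ in the degenerate case where every $e_i = 0$). In every case the free rank of $G^{ab}$ is at least $n - 1 \geq 1$, so $G^{ab}$, and hence $G$ itself, surjects onto $\Z$. If $G$ had finite $C$-width, Proposition~\ref{finite c} would then give $\Z$ finite $C$-width, contradicting the previous paragraph.

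I do not foresee a genuine obstacle: the argument reduces to an application of Proposition~\ref{finite c}, a one-line $C$-width computation in $\Z$, and Smith normal form applied to a single relation vector. The only point requiring mild care is ensuring that the abelianization has free rank at least one, and this is precisely where the hypothesis $n \geq 2$ is used; it also incidentally shows that finite one-relator groups with at least two generators do not arise, so no separate case analysis is needed.
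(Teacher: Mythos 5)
Your proof is correct, but it takes a genuinely different route from the paper. The paper quotes Brodskii's theorem that every torsion-free one-relator group surjects onto $\Z$, handles the torsion-free case via Proposition~\ref{finite c}, and then reduces the general case to the torsion-free one by passing to the quotient $\langle X \mid r\rangle$ of $\langle X \mid r^m\rangle$ (the paper's phrasing of which quotients which is actually reversed, though the intended argument is clear). You instead bypass Brodskii entirely: the abelianization $\Z^n/\langle(e_1,\ldots,e_n)\rangle \cong \Z/d\Z \oplus \Z^{n-1}$ has free rank at least $n-1 \geq 1$, so every one-relator group on at least two generators surjects onto $\Z$ regardless of torsion, and Proposition~\ref{finite c} finishes the argument in one step. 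Your version is more elementary (only Smith normal form on a single vector), needs no case split between the torsion and torsion-free settings, and makes explicit the verification that $\Z$ has infinite $C$-width via the conjugation-invariant set $\{1,-1\}$, which the paper only asserts in passing. What the paper's route buys in exchange is essentially nothing for this particular corollary; Brodskii's theorem is the natural tool only if one wants the statement for one-generator presentations of infinite groups or other situations where the exponent-sum argument is unavailable, which is not the case here.
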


    \begin{proof}
         Utilizing Brodskii's result \cite{brodskii1980equations}, which states that every torsion-free one-relator group is homomorphic to an infinite cyclic group, and proposition \ref{finite c}, it is obvious that the corollary holds for one relator torsion free group. Every one relator group is a quotient of torsion free one relator group. Since torsion free one relator group has infinite $C$-width, hence every one relator group has infinite $C$-width by proposition \ref{finite c}.
    \end{proof}

Our main theorem are as follows:   

\begin{theorem}{\label{HNN}}
    Let $G_*$ be HNN extension of $G$ as defined above. Then $G_*$ has infinite $C$-width.
\end{theorem}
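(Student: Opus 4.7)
The plan is to exhibit one conjugation-invariant generating set $S$ of $G_*$ together with a sequence of elements whose $S$-length is unbounded; the natural candidate is the $G_*$-conjugation closure of $G\cup\{t,t^{-1}\}$, and the natural witnesses are the powers $t^n$.

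First I would check that the presentation of $G_*$ yields a well-defined exponent-sum homomorphism
\[
\sigma : G_* \longrightarrow \Z, \qquad \sigma|_G \equiv 0, \quad \sigma(t) = 1,
\]
since each defining relation $t^{-1}at=\phi(a)$ is sent to $-1+0+1=0=\sigma(\phi(a))$ for $a\in A$. In particular $\sigma(t^n)=n$ for every $n$, so the powers of $t$ are all distinct.

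Next I would set
\[
S \;=\; \bigl\{\,hxh^{-1} : h\in G_*,\ x\in G\cup\{t,t^{-1}\}\,\bigr\}.
\]
By construction $S$ is conjugation-invariant and closed under inverses (since $G\cup\{t,t^{-1}\}$ is), and it contains $G\cup\{t\}$, so it generates $G_*$. The key observation is that $\sigma$, being a homomorphism into an abelian group, is constant on conjugacy classes, so $\sigma(s)\in\sigma(G\cup\{t,t^{-1}\})=\{-1,0,1\}$ for every $s\in S$. Hence if $t^n = s_1 s_2 \cdots s_k$ is any expression with $s_i\in S\cup S^{-1}=S$, then
\[
n \;=\; \sigma(t^n) \;=\; \sum_{i=1}^{k}\sigma(s_i) \;\le\; k,
\]
so $l_S(t^n)\ge n$. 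Letting $n\to\infty$ shows that the width of $G_*$ with respect to $S$ is infinite, and therefore $G_*$ has infinite $C$-width.

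The main obstacle: essentially none. The whole argument rides on the existence of the exponent-sum homomorphism $\sigma$, which is a built-in feature of every HNN extension and does not even require $A,B$ to be proper. The only verifications are that $S$ generates $G_*$ and that $\sigma$ is constant on each conjugacy class, both of which are immediate. I would expect the amalgamated-product analogue to be genuinely harder, since a non-trivial amalgam need not admit any non-trivial homomorphism to $\Z$; there one would have to replace $\sigma$ by a Bass--Serre translation length or a syllable-length invariant coming from the normal form theorem.
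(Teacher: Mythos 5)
Your proof is correct and takes essentially the same approach as the paper: both arguments hinge on the $t$-exponent-sum homomorphism $\sigma : G_* \longrightarrow \Z$ killing $G$ and sending $t$ to $1$. The only difference is presentational --- the paper factors the conclusion through Proposition~\ref{finite c} (finite $C$-width passes to quotients) together with the previously noted fact that $\Z$ has infinite $C$-width, whereas you inline that reduction by exhibiting the pulled-back conjugation-invariant generating set $S$ explicitly and bounding $\sigma$ on it to get $l_S(t^n)\geq n$.
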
    
\begin{theorem}{\label{amal}}
    Let $G$ be amalgamated free product as defined above. Then
    \begin{enumerate}
        \item{\label{amal1}} $G$ has infinite $C$-width if $|G_1:H| \geq 3$ and $|G_2:H| \geq 2.$
        \item{\label{amal2}} $G$ has finite $C$-width if  $|G_1:H| \leq 2$ and $|G_2:H| \leq 2$ and $H$ has finite $C$-width.
    \end{enumerate}
\end{theorem}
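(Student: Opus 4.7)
For Part \ref{amal2}, the plan is a direct application of Theorem \ref{ext}. Since $|G_i:H| \leq 2$, $H$ is normal in each $G_i$ and hence in $G = G_1 *_H G_2$; the quotient $G/H \cong (G_1/H) * (G_2/H)$ is isomorphic to $\{1\}$, $\mathbb{Z}_2$, or $\mathbb{Z}_2 * \mathbb{Z}_2$, all of which have finite $C$-width (the last by the quoted Bardakov theorem). Combined with the hypothesis on $H$, Theorem \ref{ext} concludes.

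For Part \ref{amal1}, the plan is to exhibit a single conjugation invariant generating set with unbounded width, namely $S$ equal to the set of all $G$-conjugates of non-identity elements of $G_1 \cup G_2$. I would split into two cases depending on whether some element of $(G_1 \cup G_2) \setminus H$ has its square outside $H$. In the \emph{generic} case, pick such an element, say $a \in G_1 \setminus H$ with $a^2 \notin H$, together with any $b \in G_2 \setminus H$, and set $w_n = (ab)^n$, cyclically reduced of syllable length $2n$. The heart of the argument is a homogeneous quasi-morphism $\phi : G \to \mathbb{R}$ with bounded defect $D$, constructed as a Brooks-style signed count of the coset-pattern ``$aH$-syllable followed by $bH$-syllable'' minus the count of the inverse pattern ``$b^{-1}H$-syllable followed by $a^{-1}H$-syllable'' in the cyclically reduced normal form. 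Elliptic elements have cyclic syllable length at most $1$, so $\phi|_S \equiv 0$; direct counting gives $\phi(w_n) = n$. Any factorisation $w_n = s_1 \cdots s_k$ with $s_i \in S$ then yields $n = |\phi(w_n)| \leq (k-1)D$, forcing $l_S(w_n) \to \infty$ and hence infinite $C$-width.

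In the \emph{degenerate} case, every element of $(G_1 \cup G_2) \setminus H$ has its square in $H$; expanding $(ah)^2 \in H$ for $a \in G_1, h \in H$ shows this forces $H$ to be normal in both $G_i$, so $G/H = (G_1/H) * (G_2/H)$ is a free product of elementary abelian $2$-groups. The hypothesis $|G_1:H| \geq 3$ forces $|G_1/H| \geq 4$, so by the quoted Bardakov theorem $G/H$ has infinite $C$-width, and Proposition \ref{finite c} transfers this to $G$. The main obstacle lies in the generic case: verifying bounded defect for the Brooks counting function in the amalgamated product setting. Cyclic reduction in $G_1 *_H G_2$ is subtler than in a free group --- consecutive syllables in the same factor can combine and the combination may itself lie in $H$, triggering cascading cancellation --- and the defect bound requires showing such cascades terminate after boundedly many steps, so that multiplying two cyclically reduced words perturbs the counted pattern only within a bounded boundary region.
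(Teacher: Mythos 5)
Part \ref{amal2} of your proposal is correct and coincides with the paper's argument ($H$ is normal of index at most $2$ in each factor, $G/H$ is a free product of groups of order at most $2$, and Theorem \ref{ext} finishes). In Part \ref{amal1} your \emph{degenerate} case is a clean and correct reduction --- all squares landing in $H$ does force $H\trianglelefteq G_i$, the quotient $(G_1/H)*(G_2/H)$ has a free factor of order at least $4$, and Bardakov's theorem plus Proposition \ref{finite c} conclude --- and this is arguably tidier than what the paper does there. However, the dichotomy ``some $a$ has $a^2\notin H$'' is not the right one, and your \emph{generic} case has a genuine gap, not merely a technical verification left to do.

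The gap is twofold. First, in a reduced expression $x_1\cdots x_n$ the interior syllables are determined only up to the double coset $Hx_iH$ (one may replace $x_i$ by $h_{i-1}^{-1}x_ih_i$ with compensating changes to the neighbours), so a count of ``syllables lying in the left coset $aH$'' is not a function of the group element; fixing a normal form does not rescue it, because the $H$-correction created at the junction of a product propagates through \emph{every} syllable of the second factor and can change all of their left cosets, which destroys bounded defect. Second, if you pass to the only invariant data, the double cosets, your signed count degenerates whenever $HaH=Ha^{-1}H$ and $HbH=Hb^{-1}H$: in $w_n=(ab)^n$ the inverse pattern then occurs $n-1$ times against $n$ occurrences of the pattern, so $\phi(w_n)$ is bounded and its homogenization vanishes. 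This situation occurs inside your ``generic'' case: take $G_1=S_3$ and $H=\langle\tau\rangle$ for a transposition $\tau$, and let $a$ be a $3$-cycle; then $a^2\notin H$, yet $S_3\setminus H$ is a single double coset, so $HaH=Ha^{-1}H$ (and any $G_2$ with $H$ of index $2$ gives $HbH=Hb^{-1}H$ as well). The correct dichotomy, used by the paper following Gongopadhyay--Krishna and Dobrynina, is whether some $a\in G_1\cup G_2$ satisfies $HaH\neq Ha^{-1}H$; and even in that favourable case the paper does not count a single two-syllable pattern but counts, for each $k$, the $a$-segments and $a^{-1}$-segments of length $2k-1$ in a ``special form'', reduces the differences mod $2$ and sums, precisely because the naive Brooks count lacks the required invariance and growth. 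The remaining case, $HaH=Ha^{-1}H$ for all $a$ but not all squares in $H$, is covered by neither branch of your argument and requires Dobrynina's modified construction.
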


The above two theorems can be used to determine infinteness of $C$-width of graph of groups. To begin, we will give a concise introduction to graph of groups. Let $(G,X)$ be a graph of groups, by this we mean there exists a connected graph $X$, for each vertex $x$ and edge $e$ we associate it with $G_x$ and $G_e$ respectively such that there exists monomorphisms $\phi_{(0,e)}$ and $\phi_{(1,e)}$ taking the group associated to $e$, i.e. $G_e$ into the group associated to initial and terminal vertices of the edge $e$. We assume that $G_e = G_{\hat{e}}.$ For each vertex $v \in X,$ let $S_v$ be the generating set of $G_v$ and let $T$ be a maximal subtree in $x$. We fix $S = \{ \cup_{v \in vert X} S_v \} \cup \{ {edge{X} \setminus edge{T}}\}$ to be the standard generating set of fundamental group of graph of groups $(G,X), \pi_1(G,X).$ It is easy to see that the fundamental group of any graph of groups can be represented as amalgamated free product or HNN extension. Hence we have following corollary:

\begin{cor}
    Let $X$ be a non empty connected graph. Let $\pi_1(G,X)$ be the fundamental group of graph of groups of $X$ withh the standard generating set $S.$ Then the $C$-width of $\pi_1(G,X)$ is infinite if
    \begin{enumerate}
        \item \label{cor1} $X$ is a loop with a vertex $v$ and an edge $e$ such that $G_e$ is a proper subgroup of $G_v$; or
        \item \label{cor2}$X$ is a tree and has an oriented edge $e = [v_1, v_2]$ such that removing $e$, while retaining $v_1$ and $v_2$, gives two disjoint graphs $X_1$ and $X_2$ with $P_i \in vert X_i$ satisfying the following: extending $G_e \longrightarrow G_{P_i}$ to $\phi_i: G_e \longrightarrow \pi_1(G,  X_i), i = 1, 2,$ we get $[\pi_1(G, X_1) : \phi_1(G_e)] \geq 3$ and $[\pi_1(G, X_2) : \phi_2(G_e)] \geq 2.$
        \item \label{cor3} $X$ has an oriented edge $e = [v_1, v_2]$ such that removing the edge, while retaining $v_1$ and $v_2$ does not separate $x$ and gives a new graph $X'$ satisfying the following: extending $G_e \longrightarrow G_{v_i}$ to $\phi_i : G_e \longrightarrow \pi_1(G, X'), i = 1, 2,$ we have $\phi_i(G_e) = H_i$ and $H_1, H_2$ are proper subgroups of $\pi_1(G, X').$
    \end{enumerate}
\end{cor}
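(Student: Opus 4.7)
The plan is to reduce each of the three cases to either Theorem \ref{HNN} or Theorem \ref{amal} by exhibiting the fundamental group $\pi_1(G,X)$ as an HNN extension or amalgamated free product of the appropriate form, and then invoking Proposition \ref{finite c} is not needed because we are working with the group itself (not a quotient), but we will silently rely on the standard fact that the chosen generating set $S$ is compatible with the HNN/amalgam structure so that widths transfer.

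For case (\ref{cor1}), the graph $X$ is a single loop, so by the standard structure theorem for fundamental groups of graphs of groups, $\pi_1(G,X)$ is exactly the HNN extension of $G_v$ with associated subgroups $\phi_{(0,e)}(G_e)$ and $\phi_{(1,e)}(G_e)$, both of which are proper in $G_v$ by hypothesis. Theorem \ref{HNN} applies directly and yields infinite $C$-width.

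For case (\ref{cor2}), removing the edge $e$ from the tree $X$ disconnects it into two subtrees $X_1$ and $X_2$ (each containing one endpoint of $e$). Using the maximal subtree $T$ which we may take to contain $e$, together with the maximal subtrees $T_1 = T \cap X_1$ and $T_2 = T \cap X_2$, the fundamental group decomposes as the amalgamated free product
\[
\pi_1(G,X) = \pi_1(G,X_1) *_{G_e} \pi_1(G,X_2),
\]
where $G_e$ is identified with $\phi_1(G_e) \leq \pi_1(G,X_1)$ and $\phi_2(G_e) \leq \pi_1(G,X_2)$. The index hypotheses $[\pi_1(G,X_1):\phi_1(G_e)] \geq 3$ and $[\pi_1(G,X_2):\phi_2(G_e)] \geq 2$ are exactly the hypotheses of Theorem \ref{amal}(\ref{amal1}), which then gives infinite $C$-width.

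For case (\ref{cor3}), the edge $e$ does not lie in any maximal subtree of $X$, so removing $e$ leaves a connected graph $X'$ containing all vertices of $X$. Choose a maximal subtree $T$ of $X'$; this is also a maximal subtree of $X$ and $e \notin T$. Then by the structure theorem, $\pi_1(G,X)$ is the HNN extension of $\pi_1(G,X')$ with stable letter corresponding to $e$ and associated subgroups $H_1 = \phi_1(G_e)$ and $H_2 = \phi_2(G_e)$, both proper in $\pi_1(G,X')$ by hypothesis. Theorem \ref{HNN} again delivers infinite $C$-width.

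The main (very mild) obstacle is checking that the standard generating set $S$ of $\pi_1(G,X)$, as defined in the paper just before the corollary, restricts correctly to the standard generating sets of $\pi_1(G,X_i)$ and of $\pi_1(G,X')$ used implicitly by Theorems \ref{HNN} and \ref{amal}; since those theorems are formulated for arbitrary conjugation invariant generating sets, it suffices to note that a conjugation invariant generating set in $\pi_1(G,X)$ induces a conjugation invariant generating set in the relevant factor via a retraction or inclusion argument, after which the proof of each case is immediate.
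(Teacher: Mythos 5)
Your proposal matches the paper's own argument exactly: case (\ref{cor1}) is handled as an HNN extension of $G_v$ via Theorem \ref{HNN}, case (\ref{cor2}) as the amalgamated free product $\pi_1(G,X_1) *_{G_e} \pi_1(G,X_2)$ via Theorem \ref{amal}(\ref{amal1}), and case (\ref{cor3}) as an HNN extension of $\pi_1(G,X')$ via Theorem \ref{HNN}. The extra care you take with maximal subtrees and generating sets goes slightly beyond what the paper writes, but the substance of the proof is the same.
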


In first case \ref{cor1}, the fundamental group is an HNN extension of $G_v$ and hence the result follows from the theorem $\ref{HNN}.$ In second case, $\pi_1(G,X)$ is a free product of $\pi_1(G,X_1)$ and $\pi_1(G,X_2)$ amalgamating the subgroup $G_e$, then with the condition in the theorem and by theorem \ref{amal} part \ref{amal1}, the result follows. The final result follows from theorem \ref{HNN} as fundamental group in this case is an HNN extension of $\pi_1(G,X').$

 \section{HNN Extension}
We will give a proof of theorem \ref{HNN}.
\begin{proof}
     Let's consider the homomorphism from $G_*$ to $\Z$ via the extending the map $g \mapsto 1,$ when $g \in G$ and $t^n \mapsto n.$ Then we can see $\Z$ as homomorphic image of $G_*$. And since $\Z$ has infinite $C$-width, $G_*$ also has infinite $C$-width by proposition \ref{finite c}.
\end{proof}

\section{ Amalgamated free product}

    \subsection{Proof of theorem \ref{amal}, Part \ref{amal1}:} We employ the same methodology as Gongopadhyay and Krishna \cite{gongopadhyay2020palindromic}.  The proof is divided into two cases:
    \begin{itemize}
        \item \textbf{Case 1:} When there exists an element $a \in G_1 \cup G_2$ such that $HaH \neq Ha^{-1}H.$,
        \item \textbf{Case 2:}  When there does not exist any element $a \in G_1 \cup G_2$ such that $HaH \neq Ha^{-1}H.$
    \end{itemize}
    
    First we will prove the first case. Second case is just a small manipulation of the first case as done in \cite{dobrynina2009solution}.

\subsubsection{Quasi-homomorphisms:}
 A reduced sequence is a sequence denoted by $x_1, x_2, \ldots ,x_n$ where $n\geq 0$ and it exhibits the following properties:
 \begin{enumerate}
     \item Each $x_i$ is in one of the factors.
     \item Successive $x_i, x_{i+1}$ are not in same factor.
     \item If $n > 1$, no $x_i $ is in $H$.
     \item If $n = 1, x_1 \neq 1.$
\end{enumerate}
If $x_1,x_2, \ldots,x_n, n \geq 1$ is a reduced sequence, then the product $x_1\ldots x_n $ is not trivial in $G$ and we call it a reduced word. This representation is not unique but we have following lemma:

\begin{lemma}[\cite{gongopadhyay2020palindromic}]
    Let $g = x_1\ldots x_m$ and $h = y_1\ldots y_n$ be reduced words such that $g = h$ in $G$. Then $m=n.$
\end{lemma}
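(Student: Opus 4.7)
The plan is to deduce the length equality from the classical Normal Form Theorem for amalgamated free products, proved via the van der Waerden / Artin trick of acting on a set of formal tuples. First I would fix right transversals $T_i$ for $H$ in $G_i$ containing the identity, so that every $g \in G_i$ admits a unique decomposition $g = h \cdot t$ with $h \in H$ and $t \in T_i$. I would then introduce the set $\Omega$ of formal tuples $(h; t_1, t_2, \ldots, t_k)$ where $h \in H$, $k \geq 0$, each $t_j \in T_{i_j} \setminus \{1\}$, and the indices $i_j$ alternate between $1$ and $2$. The tuple with $k = 0$ is written $(h;)$.

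Next, I would define a left action of each $G_i$ on $\Omega$. To act by $g \in G_i$ on $(h; t_1, \ldots, t_k)$, first split $gh = h'\, t$ with $h' \in H$ and $t \in T_i$; then combine $t$ with $t_1$: if $t_1 \in T_i$, further split $t\, t_1 = h''\, t'$ in $G_i$ and either append $t'$ (when $t' \neq 1$) or absorb $h''$ into the head and delete $t_1$ (when $t' = 1$); if $t_1 \in T_{3-i}$ instead, simply prepend $t$ (or leave the tuple unchanged with head adjusted by $h'$, when $t = 1$). After verifying that this prescription gives a homomorphism $G_i \to \operatorname{Sym}(\Omega)$ for $i = 1,2$, and that the two homomorphisms agree on $H$, the universal property of the amalgamated product produces a single action of $G$ on $\Omega$.

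The punchline is that applying the reduced word $x_1 x_2 \cdots x_m$ to the basepoint $(1;)$ appends exactly one non-trivial coset representative at each step: no collapse can occur because consecutive $x_i, x_{i+1}$ lie in distinct factors and none of the $x_i$ belongs to $H$. Hence the resulting tuple has length exactly $m$. The same computation for $y_1 \cdots y_n$ yields a tuple of length $n$, and since the two words represent the same element of $G$ they act identically on $(1;)$, forcing $m = n$.

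The main obstacle is the bookkeeping in the action definition: checking that each cancellation case is mutually consistent, that the prescription really defines a group homomorphism rather than just a function, and that the $G_1$-action and $G_2$-action restrict to the same action of $H$ so that the universal property of $G_1 *_H G_2$ applies. These verifications are routine but notation-heavy, and are exactly the content of the classical Normal Form Theorem for amalgamated free products; the length statement in the lemma is then an immediate corollary.
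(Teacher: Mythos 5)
The paper gives no proof of this lemma: it is quoted verbatim from \cite{gongopadhyay2020palindromic} and is the classical Normal Form Theorem for amalgamated free products. Your proposal is the standard van der Waerden--Artin permutation-representation argument for that theorem, and it is correct in outline; there is nothing in the paper to compare it against, so the only question is whether your sketch closes. Two bookkeeping points are worth flagging. First, with a left action the word $x_1x_2\cdots x_m$ is applied to the basepoint $(1;)$ starting from $x_m$, so representatives are prepended rather than appended; the reason no collapse occurs at each stage is that if the new representative were trivial then $x_i$ times the current head element of $H$ would lie in $H$, forcing $x_i \in H$ and contradicting condition (3) of a reduced sequence when $m>1$. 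Second, the case $m=1$ needs separate (but immediate) treatment: the definition of a reduced sequence allows $x_1 \in H\setminus\{1\}$, in which case the resulting tuple has length $0$ rather than $1$, so tuple length does not literally equal word length there; the lemma still follows because a tuple of length $\le 1$ can never coincide with one of length $\ge 2$, and two reduced words of length $1$ trivially have equal lengths. With those caveats your argument is complete and is exactly the proof one finds in Lyndon--Schupp or Magnus--Karrass--Solitar.
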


\begin{definition}
Let $g = x_1\ldots x_m$ be a reduced word in $G$. The elements $x_i$ are said to be syllables of $g$ and length of $g$, denoted by $l(g),$ is the number of syllables in $g.$ Here $l(g) = m.$
\end{definition}
\begin{definition}
    Let $a \in G_1 \cup G_2$ such that $HaH \neq Ha^{-1}H.$ Let $g \in G$ and $g = x_1\ldots x_m$ be a reduced word representing it. Then we define the special form of $g$ associated to this reduced word by replacing $x_i$ by $ua^{\theta}u',$ whenever $x_i = ua^{\theta}u'$ for some $u,u' \in H$ and $\theta \in\{ +1,-1 \}$, in the following way:
\begin{itemize}
    \item when $i =1, x_1 = ua^{\theta}u',$ we write $g = ua^{\theta }{x_2}'\ldots x_m,$ where ${x_2}' = u'x_2.$
    \item when $2 \leq i \leq n-1, x_i = ua^{\theta}u',$ we write $g = x_1x_2\ldots x'_{i-1}a^{\theta} {x'}_{i+1} \ldots x_m,$ where ${x'}_{i-1} = x_{i-1}u$ and ${x'}_{i+1} = u'x_{i+1}.$
    \item when $i = m$ and $x_m = ua^{\theta}u',$ where $\theta \in \{ +1, -1 \}$ and $u,u' \in H,$ we replace $x_n$ by $g = x_1x_2 \ldots x'_{m-1}a^{\theta}u',$ where ${x'}_{m-1} = x_{m-1}u.$
\end{itemize} 
\end{definition}

An $a$-segment of length $2k-1$ is a segment of the reduced word of the form $ax_1\ldots x_{2k-1}a,$ where $x_j \neq a$ for $j = 1, \ldots, 2k-1$ such that the length of the segment $x_1\ldots x_{2k-1}$ is $2k-1.$

Similarly an $a^{-1}$-segment of length $2k-1$ is a segment of the reduced word of the form $a^{-1}x_1\ldots x_{2k-1}a^{-1},$ where $x_j \neq a^{-1}$ for $j = 1, \ldots, 2k-1$ such that the length of the segment $x_1\ldots x_{2k-1}$ is $2k-1.$

For $g \in G$ expressed in a special form, we define
\[
\begin{aligned}
    p_k(g) &= \text{number of $a$-segments of length $2k-1,$}\\
    m_k(g) &= \text{number of $a^{-1}$-segments of length $2k-1,$}\\
    d_k(g) &= p_k(g) - m_k(g),\\
    r_k(g) &= \text{remainder of $d_k(g)$ divided by $2,$ and}
\end{aligned}
\]

\[
f(g) = \sum_{k=1}^{\infty} r_k(g).
\]

We have, from the definition, $p_k(g) = m_k(g^{-1})$ and hence $d_k(g^{-1}) + d_k(g) = 0$ for all $g \in G.$

\begin{lemma}[\cite{gongopadhyay2020palindromic}]
$f$ is well-defined on special forms.    
\end{lemma}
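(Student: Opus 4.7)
The plan is to show that $f(g)$ depends only on $g$, not on any of the choices made in constructing a special form. I would isolate two sources of non-uniqueness and handle each in turn, exploiting the running hypothesis $HaH \neq Ha^{-1}H$.

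First, for a \emph{fixed} reduced word $x_1 \dots x_m$, the only remaining freedom is the choice of $u, u' \in H$ in a factorization $x_i = u a^{\theta} u'$ of an $a^{\pm}$-syllable. Since $HaH$ and $Ha^{-1}H$ are disjoint double cosets, the sign $\theta$ is uniquely determined by $x_i$, and every valid factorization produces the same literal token $a^{\theta}$ in position $i$; only the way the $H$-pieces $u, u'$ are absorbed into the neighbours differs. Because an $a^{\pm}$-syllable lies in whichever factor ($G_1$ or $G_2$) contains $a$, while its neighbours in the reduced word lie in the opposite factor, no neighbour of an $a^{\pm}$-syllable can itself be an $a^{\pm}$-syllable. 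So the redistribution of $u, u'$ cannot create or destroy $a^{\pm}$ tokens elsewhere, and the counts $p_k, m_k$ computed from the special form depend only on the reduced word, not on the factorizations chosen at each $a^{\pm}$-syllable.

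Second, any two reduced words for the same $g$ are related, by the normal form theorem for amalgamated free products (the length-preservation portion of which was already quoted above), via a finite chain of elementary $H$-shifts $(x_i, x_{i+1}) \mapsto (x_i h, h^{-1} x_{i+1})$ with $h \in H$. Each such shift preserves the double coset $H x_i H$ of every syllable, and therefore preserves membership in $HaH$ and in $Ha^{-1}H$ separately. Combined with the previous paragraph, this shows that the positions and signs of all $a^{\pm}$ tokens in any special form of $g$ --- hence the $a$-segment and $a^{-1}$-segment counts $p_k(g), m_k(g)$ for every $k$, and consequently $d_k(g)$, $r_k(g)$, and $f(g) = \sum_{k=1}^{\infty} r_k(g)$ --- are intrinsic to $g$.

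The main obstacle is really just the bookkeeping at the endpoint cases $i = 1$ and $i = m$ in the definition of the special form, where an $H$-piece is absorbed on only one side instead of being split between two neighbours; but since these leftover $H$-factors lie strictly outside every $a$-segment and $a^{-1}$-segment, they do not affect $p_k$ or $m_k$, and the two-step argument goes through unchanged. The identity $p_k(g) = m_k(g^{-1})$ noted after the definition provides a useful sanity check that the invariants behave well under inversion.
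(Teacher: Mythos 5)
The paper does not prove this lemma itself but imports it from \cite{gongopadhyay2020palindromic}, and your argument is correct and is essentially the standard one used there: since $HaH$ and $Ha^{-1}H$ are distinct, hence disjoint, double cosets, the sign $\theta$ of each $a^{\pm}$-syllable is determined by its syllable alone, and the normal form theorem for amalgamated products shows that the syllable double cosets (and the length) are invariants of $g$, so the positions and signs of the $a^{\pm}$ tokens, and with them all the counts $p_k$, $m_k$, are intrinsic to $g$. Your handling of the two sources of ambiguity (choice of factorization $x_i = ua^{\theta}u'$ versus choice of reduced word, plus the boundary cases $i=1,m$) is complete; I see no gap.
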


\begin{lemma}[\label{pro} \cite{gongopadhyay2020palindromic}]
    $f$ is a quasi-homomorphism, in fact, for any $g,h \in G, f(gh) = f(g) + f(h) + 9$ 
\end{lemma}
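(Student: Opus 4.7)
The plan is to exploit the fact that $f$ records only parities of segment-count differences, so forming the product $gh$ can alter $r_k$ for only a bounded number of indices $k$, and for each by at most $2$; this yields a uniform bound on $|f(gh)-f(g)-f(h)|$.

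First, I would fix special forms $g=\alpha_1\cdots\alpha_m$ and $h=\beta_1\cdots\beta_n$ and analyse their concatenation. If $\alpha_m$ and $\beta_1$ lie in different factors, the product $gh$ is already a reduced word; if they lie in the same factor, then $\alpha_m\beta_1$ is absorbed into a single syllable and the resulting word may need further folding when that combined syllable lies in $H$. In all cases only the last two syllables of $g$ and the first two syllables of $h$ can be affected, so every $a$- and $a^{-1}$-segment of $g$ that does not touch these boundary syllables, and every such segment of $h$ that does not touch its boundary syllables, survives unchanged in some special form of $gh$.

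Next, I would catalogue the possible boundary modifications: the rightmost $a^{\pm 1}$-segment of $g$ and the leftmost $a^{\pm 1}$-segment of $h$ may each vanish, be shortened or lengthened by one (because of $H$-element redistribution across the junction), or merge into a single segment; if both boundary syllables present $a^{\pm 1}$ after normalisation, a new junction segment that straddles the interface may appear as well. A counting argument then shows that $p_k(gh)-p_k(g)-p_k(h)$ and $m_k(gh)-m_k(g)-m_k(h)$ are nonzero for only a bounded (in fact absolutely bounded, independent of $g$ and $h$) set of indices $k$, and each nonzero difference has absolute value at most a small constant.

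Since $r_k\in\{0,1\}$ and $r_k\equiv d_k\pmod 2$, outside this bounded set we have $r_k(gh)\equiv r_k(g)+r_k(h)\pmod 2$, so the corresponding summands in $f$ agree; inside the set each index contributes at most $2$ to $|f(gh)-f(g)-f(h)|$. Summing the worst case over the uniformly bounded set of affected indices yields the asserted constant $9$. The main obstacle is the careful case-by-case enumeration at the junction, splitting on whether $\alpha_m$ and $\beta_1$ share a factor, whether they present $a^{\pm 1}$ after normalisation, and how the adjacent $H$-elements redistribute; once this local analysis is carried out, the lemma follows by bookkeeping from the observation that both the number of affected indices and the per-index contribution are uniformly bounded.
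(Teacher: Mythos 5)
A preliminary remark: the paper does not prove this lemma at all --- it is imported verbatim from \cite{gongopadhyay2020palindromic} --- and as stated the equality $f(gh)=f(g)+f(h)+9$ cannot be literal (take $g=h=1$; $f\ge 0$ by definition). The intended and usable statement is the inequality $f(gh)\le f(g)+f(h)+9$, which is what Lemma \ref{quasi} actually invokes. Your proposal should be judged as an attempt to prove that inequality.

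The central claim of your proposal --- that ``in all cases only the last two syllables of $g$ and the first two syllables of $h$ can be affected'' --- is false, and this is a genuine gap rather than a presentational one. In an amalgamated free product, if $x_m y_1$ lies in $H$ the folding does not stop: the resulting $H$-element is absorbed into $x_{m-1}$, which then sits in the same factor as $y_2$, and the cancellation can propagate through arbitrarily many syllables (consider $h=g^{-1}v$ with $g$ long, where $gh=v$ is short and essentially every segment of $g$ and of $h$ disappears). Your own second paragraph acknowledges that ``further folding'' may be needed, but the rest of the argument ignores it; with unbounded cancellation, the set of indices $k$ for which $p_k$ and $m_k$ change is not uniformly bounded, and the bookkeeping in your final paragraph collapses. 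The reason the lemma is nevertheless true is not that segments survive, but that the counting function is the \emph{difference} $d_k=p_k-m_k$: writing $g=g_1c$ and $h=\bar{c}h_1$ where $c\bar{c}$ reduces into $H$ (so $\bar{c}$ is an $H$-perturbed inverse of $c$), the identity $d_k(c)+d_k(c^{-1})=0$ --- recorded in the paper just before the lemma as $d_k(g^{-1})+d_k(g)=0$ --- forces the contributions of the entire cancelled portions to annihilate each other inside $d_k(g)+d_k(h)$. Only the finitely many $a^{\pm1}$-segments that straddle the three cut points (in $g$, in $h$, and in $gh$) are unaccounted for, and since non-nested $a$-segments overlap only at endpoints, each cut point is straddled by at most one $a$-segment and one $a^{-1}$-segment; this is what produces a uniform constant. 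Your proposal would need to be rebuilt around this antisymmetry/pairing mechanism; as written, the ``bounded junction'' premise on which everything rests does not hold.
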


We will take $S = (G_1)^G \cup (G_2)^G$ to be conjugation invariant generating set of $G.$

\begin{lemma}\label{S}
    For any $s \in S$, we have $f(s) \leq 3.$
\end{lemma}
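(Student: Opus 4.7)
Let $s \in S$, so $s = g c g^{-1}$ for some $g \in G$ and $c \in G_1 \cup G_2$. My plan is to first put $s$ in a palindromic reduced form and then use its mirror symmetry to show that almost all segment contributions to $d_k$ cancel out. Iteratively absorbing the outermost syllable of $g$ into the current centre whenever it lies in the same factor, the reduced word of $s$ either has length at most $1$ (giving $f(s)=0$ trivially) or takes the palindromic shape
\[
s = y_1 \cdots y_r \, c' \, y_r^{-1} \cdots y_1^{-1}
\]
with $c' \in G_1 \cup G_2$ and overall length $2r+1 \geq 3$. Passing to a special form, the rewriting rule applied to $y_i$ is the inverse of the rule applied to $y_i^{-1}$, so if an $a^{\theta}$ sits at special-form position $p \leq r$ in the left half then an $a^{-\theta}$ sits at the mirror position $2r+2-p$ in the right half, with $c'$ at the centre.

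The next step is to classify each $a$- and $a^{-1}$-segment of $s$ according to whether it lies strictly in the left half, strictly in the right half, or crosses the centre. The mirror map furnishes a length-preserving bijection between $a$-segments strictly inside the left half and $a^{-1}$-segments strictly inside the right half, and similarly between $a^{-1}$-segments on the left and $a$-segments on the right. These side contributions enter $p_k$ and $m_k$ with equal multiplicity and therefore cancel in $d_k = p_k - m_k$ for every $k$, so only segments crossing the centre can shift the parities $r_k(s)$.

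I would then analyse the middle-crossing segments case by case on $c'$. If $c' \notin \{a, a^{-1}\}$, the unique middle-crossing $a$- and $a^{-1}$-segments (when both exist) have the same gap, both determined by the outermost $a^{\pm}$ on the two sides, so they raise $p_k$ and $m_k$ equally and leave every $d_k$ fixed; hence $f(s)=0$. If $c' = a$, the central $a$ produces two middle-involving $a$-segments with gaps $2k_1, 2k_2$, while the middle-crossing $a^{-1}$-segment has gap $2(k_1+k_2)$ by the mirror. Since $k_1, k_2 \geq 1$, at most three values of $k$ receive a non-cancelling shift in $d_k$ (of magnitude $\pm 1$, or $\pm 2$ in $d_{k_1}$ when $k_1 = k_2$), so $f(s) \leq 3$. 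The case $c' = a^{-1}$ is symmetric, and edge cases in which $L_+$ (the $a$-positions in the left half) or $L_-$ is empty only reduce the number of middle-crossings.

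The main technical hurdle is confirming that the mirror symmetry survives the passage to the special form, since the boundary rules for $i=1$ and $i=m$ differ from the interior rule and require absorbing $H$-factors into neighbouring syllables; one must check that the rule applied at position $i$ on the left commutes with inversion so as to produce the mirror rule at position $2r+2-i$ on the right. Once this invariance is verified, the segment counting outlined above yields $f(s) \leq 3$.
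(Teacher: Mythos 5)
Your proof is correct and takes essentially the same approach as the paper's: write the conjugate in a reduced symmetric form, observe that segments confined to one half cancel in $d_k$ against their mirror images in the other half, and note that only when the central syllable is $a^{\pm 1}$ do middle-crossing segments contribute, affecting at most three values of $k$ (two extra $a$-segments and one extra $a^{-1}$-segment of a third length). Your version is in fact more explicit than the paper's about the middle-crossing bookkeeping and the lengths $2k_1-1$, $2k_2-1$, $2(k_1+k_2)-1$, which the paper only asserts.
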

\begin{proof}
    Let $g^{-1}kg$ be an arbitrary element of the set $S$, where $g \in G$ and $k \in G_1 \cup G_2$. Suppose $g = x_1x_2\ldots x_m$ is an arbitrary reduced element of $G$. After reducing the word  $g^{-1}kg = x_m^{-1}\ldots x_1^{-1}kx_1\ldots x_m$, we get a reduced word $g'^{-1}k'g' = x_m^{-1}\ldots x_i^{-1}k'x_i\ldots x_m$ for some $i < m$ such that $x_i$ and $k'$ lie in different $G_i$. So, withouth loss of generality, we assume that the word  $g^{-1}kg = x_m^{-1}\ldots x_1^{-1}kx_1\ldots x_m$ is reduced word. We will consider two cases:

Case 1: $k \in (G_1 \cup G_2) \setminus \{a, a^{-1}\}$

Number of $a$-segment ($a^{-1}$-segment)in $g$ is same as number of  $a^{-1}$-segment ($a$-segment) in $g^{-1}$. If there is an $a$-segment containing $k$ if and only if there is an $a^{-1}$-segment of same length containing $k$.  
In this case, number of $a$-segment is the same as number of $a^{-1}$ segment. Hence $f(g^{-1}kg) = 0$.

Case 2: $k \in \{a, a^{-1}\}$

Number of $a$-segment ($a^{-1}$-segment)in $g$ is same as number of  $a^{-1}$-segment ($a$-segment) in $g^{-1}$. Suppose, if $k = a$, then we may get two extra $a$-segment of different length and one extra $a^{-1}$-segment different from the length of those two extra $a$-segment. We got $f(g^{-1}kg) \leq 3$. Similarly for $k = a^{-1}$, we get $f(g^{-1}kg) \leq 3$.
In each of these cases, we have shown that $f(g^{-1}kg) \leq 3$.
\end{proof}
\begin{lemma}\label{quasi}
    Let $g \in G$ be a product of $k$ many elements of $S$, then $f(s) \leq 12k-9.$
\end{lemma}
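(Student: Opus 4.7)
The plan is a straightforward induction on $k$, leveraging the two preceding lemmas: Lemma~\ref{S}, which bounds $f$ on each generator by $3$, and Lemma~\ref{pro}, the quasi-homomorphism identity $f(gh) \le f(g)+f(h)+9$ (read as the usual quasi-morphism bound). Write $g = s_1 s_2 \cdots s_k$ with each $s_i \in S$.

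For the base case $k=1$, the statement $f(g) = f(s_1) \le 3 = 12(1) - 9$ is exactly Lemma~\ref{S}. For the inductive step, assume the bound holds for any product of $k-1$ elements of $S$, and set $g' = s_1 \cdots s_{k-1}$. Applying Lemma~\ref{pro} to the decomposition $g = g' \cdot s_k$, followed by the inductive hypothesis on $g'$ and Lemma~\ref{S} on $s_k$, gives
\[
f(g) \;\le\; f(g') + f(s_k) + 9 \;\le\; \bigl(12(k-1)-9\bigr) + 3 + 9 \;=\; 12k - 9,
\]
which closes the induction. Note the arithmetic is tight: the $+9$ from the quasi-homomorphism defect combines with the $+3$ from the generator bound to contribute exactly $+12$ per additional factor, matching the coefficient of $k$ in the claimed bound.

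There is no real obstacle here; the lemma is the routine combinatorial consequence of the two preceding non-trivial lemmas. The only thing to be careful about is the interpretation of Lemma~\ref{pro}: as stated with equality it would be inconsistent (taking $g = 1$ forces $0 = 9$), so it must be read as the one-sided inequality $f(gh) \le f(g)+f(h)+9$, which is the content needed here and is the usual meaning of ``quasi-homomorphism'' in this context. With that reading, the induction above is complete, and the resulting bound $f(g) \le 12k - 9$ is exactly what will be needed later to produce elements whose $S$-length grows without bound (by exhibiting explicit elements $g_n$ with $f(g_n) \to \infty$, forcing their $S$-length to exceed $(f(g_n)+9)/12$).
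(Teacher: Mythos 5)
Your proof is correct and is exactly the argument the paper intends: iterate Lemma~\ref{pro} together with the bound $f(s)\le 3$ from Lemma~\ref{S}, yielding $f(g)\le 3k+9(k-1)=12k-9$. Your side remark that Lemma~\ref{pro} must be read as the one-sided inequality $f(gh)\le f(g)+f(h)+9$ (the stated equality fails already at $g=h=1$) is a fair and worthwhile correction.
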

\begin{proof}
    Let $g = g_1\ldots g_k$, then by applying lemma \ref{pro}  iteratively, and applying lemma \ref{S}, we get this result.
\end{proof}
\begin{proof}
[\textbf{Proof of theorem \ref{amal}, part \ref{amal1}, case 1:}]  We will construct the same sequence as constructed in \cite{gongopadhyay2020palindromic} and show that the function $f$ is unbounded.

Let $b \in G_1 \setminus H$.

Let $g_1 = baba^{-1}ba$. Then $p_1(g_1) = 0$, $p_2(g_1) = 1$, and $p_k(g_1) = 0$ for $k \geq 2$. Also, $m_k(g_1) = 0$ for all $k$, $d_2(g_1) = 1$, and $d_k(g_1) = 0$ for all other $k$. So $f(g_1) = 1$.

Let $g_2 =  baba^{-1}baba^{-1}ba^{-1}ba$ and we get $f(g_2) = 2$.

For $g_3 = baba^{-1}baba^{-1}ba^{-1}baba^{-1}ba^{-1}ba^{-1}ba$, we get $f(g_3) = 4$. 

In general, for $g_n = baba^{-1}ba(ba^{-1})^2 \ldots ba(ba^{-1})^{n-1}ba(ba^{-1})^nba$, we get $f(g_n) \geq n-1$. By Lemma \ref{quasi}, we get that for any $k \in \mathbb{N}$, we have $g \in G$ such that $g$ cannot be written as a product of $n$ elements from $S$, where $n < k$. Hence, $G$ has an infinite $C$-width.

\end{proof}

\subsubsection{Case 2:}
For a non trivial $a \in G_1 \cup G_2$ such that $HaH = Ha^{-1}H.$ This case can be done using similar method as above. There will be just slight modification which can be done using \cite{dobrynina2000width} and \cite{dobrynina2009solution}.
\\

\noindent
\subsection{Proof of theorem 10, Part 2:}
\begin{proof}

     It is enough to assume that $|G_i:H_i| = 2.$ From theorem \ref{ext}, $G_1$ and $G_2$ have finite $C$-width as $H$ is a subgroup of index $2$ in $G_i$ and $H$ has finite $C$-width. $H$ is normal in $G$ as it is normal in $G_1$ and $G_2$ both. By universal property of amalgamated free product, $G/H$ is isomorphic to  ${\mathbb{Z}}_2 \star {\mathbb{Z}}_2$. Hence $G/H$ is also of finite $C$-width by \cite{bardakov2012generating}. And since finite $C$-width is closed under extension of group, we have that $G$, with a normal subgroup $H$ such that $H$ and $G/H$ both have finite $C$-width, is of finite $C$-width.
\end{proof}
\section*{Acknowledgement}
I want to thank Prof. Krishnendu Gongopadhyay for suggesting this problem during my visit to IISER Mohali.
\bibliographystyle{siam}
\bibliography{Ref}
\end{document}